\definecolor{webgreen}{rgb}{0,0,1}
\definecolor{recrown}{rgb}{1,.2,.6}
\begin{document}
\newtheorem{theorem}{Theorem}
\newtheorem{corollary}[theorem]{Corollary}
\newtheorem{lemma}[theorem]{Lemma}
\theoremstyle{definition}
\newtheorem*{example}{\bf Example}
\theoremstyle{theorem}
\newtheorem{conjecture}[theorem]{Conjecture}
\newtheorem{thmx}{\bf Theorem}
\renewcommand{\thethmx}{\text{\Alph{thmx}}}
\newtheorem{lemmax}{\bf Lemma}
\renewcommand{\thelemmax}{\text{\Alph{lemmax}}}
\hoffset=-0cm
\theoremstyle{definition}
\newtheorem*{definition}{Definition}
\theoremstyle{remark}
\newtheorem*{remark}{\bf Remark}
\theoremstyle{remark}
\newtheorem*{remarks}{\bf Remarks}
\parindent=0cm
\title{\bf A generalization of Dumas irreducibility criterion}
\author{Jitender Singh$^{1,\dagger}$ {\large \orcidlink{0000-0003-3706-8239}}}
\address[1]{Department of Mathematics,
Guru Nanak Dev University, Amritsar-143005, India\newline 
{\tt jitender.math@gndu.ac.in}}
\markright{}
\date{}
\footnotetext[2]{Corresponding author email(s): {\tt jitender.math@gndu.ac.in}

2020MSC: {Primary 12E05; 11C08}\\

\emph{Keywords}: Dumas irreducibility criterion; Newton polygon; Polynomial factorization; Integer coefficients.
}
\maketitle
\newcommand{\K}{\mathbb{K}}
\begin{abstract}
Using Newton polygons, a key factorization result for polynomials over discrete valuation domains is proved, which in particular yields new irreducibility criteria including a generalization of the classical irreducibility criterion of Dumas.
\end{abstract}
\section{Introduction}
The classical irreducibility criteria due to Sch\"{o}nemann \cite{S}, Eisenstein \cite{E}, Perron \cite{P}, and  Dumas \cite{D} have become paradigms in the study of testing irreducibility of polynomials having integer coefficients. The irreducibility criterion due to Perron \cite{P} is based on a condition on the coefficients of a given polynomial forcing its all but one zeros to lie in the interior of  the closed unit disk in the complex plane, which immediately implies the irreducibility. On the other hand, each of the irreducibility results due to   Sch\"{o}nemann \cite{S} and Eisenstein \cite{E} rests on divisibility properties of the coefficients of a polynomial with respect to a prime number. In 1906, Dumas \cite{D} proved the following irreducibility criterion, which is a far reaching generalization of the irreducibility results of Sch\"{o}nemann \cite{S} and Eisenstein \cite{E}.
\begin{thmx}[Dumas \cite{D}]\label{th:A}
Let $(R,v)$  be a discrete valuation domain. Let $f=a_0 + a_1x + \cdots + a_nx^n \in {R}[x]$ be a primitive polynomial such that
\begin{enumerate}[label=$(\roman*)$]
\item $v(a_n)=0$,
\item $\frac{v(a_0)}{n}<\frac{v(a_i)}{n-i}$ for each $i=1,\ldots,n-1$,
\item $\gcd(v(a_0),~n) = 1$.
\end{enumerate}
Then the polynomial $f$ is irreducible in $R[x]$.
\end{thmx}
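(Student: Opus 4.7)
The plan is to read off the Newton polygon of $f$ from its coefficient valuations, use hypotheses (i)--(ii) to show that this polygon is a single segment, use hypothesis (iii) to eliminate lattice points in the interior of that segment, and then appeal to the classical Dumas product theorem for Newton polygons to block any nontrivial factorization $f = gh$ in $R[x]$.

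First I would form the Newton polygon $N_v(f)$ of $f$ with respect to $v$, i.e., the lower convex hull of the points $\{(i, v(a_i)) : 0 \le i \le n,\, a_i \neq 0\}$. Hypothesis (i) puts $(n, 0)$ at the right endpoint, while hypothesis (ii), rewritten as $(n-i)\, v(a_0) < n\, v(a_i)$ for each $1 \le i \le n-1$, is exactly the condition that each interior point $(i, v(a_i))$ lies strictly above the segment $S$ joining $(0, v(a_0))$ to $(n, 0)$. Hence $N_v(f)$ equals $S$, a single side of slope $-v(a_0)/n$, which by hypothesis (iii) contains no lattice points apart from its two endpoints.

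Next I would argue by contradiction: assume $f = gh$ in $R[x]$ with $d := \deg g$ and $1 \le d \le n - 1$ (primitivity of $f$ rules out any constant non-unit factor, so this is the only shape a nontrivial factorization can take). Since $v(a_n) = 0$, the leading coefficients of $g$ and $h$ both have valuation $0$, so $N_v(g)$ ends at $(d, 0)$. By the Dumas product formula, the sides of $N_v(gh)$ are obtained by sorting and concatenating the sides of $N_v(g)$ and $N_v(h)$ by slope; since $N_v(f) = S$ has the single slope $-v(a_0)/n$, the polygon $N_v(g)$ must consist of the lone segment from $(0, v(g(0)))$ to $(d, 0)$ of that same slope. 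This forces $v(g(0)) = d\, v(a_0)/n \in \mathbb{Z}_{\ge 0}$, so $n \mid d\, v(a_0)$, and combined with $\gcd(v(a_0), n) = 1$ this gives $n \mid d$, contradicting $1 \le d \le n - 1$.

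The main ingredient requiring real care is the Dumas product theorem for Newton polygons; everything else is slope and gcd arithmetic. I would expect the author's announced \emph{key factorization result} to be a refinement of this product theorem, strong enough to yield both the classical criterion above and its promised generalization in a uniform way.
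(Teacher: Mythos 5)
Your proof is correct and follows essentially the same route as the paper, which obtains Theorem~\ref{th:A} as the case $j=n$, $\ell=0$ of Theorem~\ref{th:1}: both arguments read the single edge of slope $-v(a_0)/n$ off the Newton polygon via hypotheses $(i)$--$(ii)$ and then invoke Dumas's product theorem (Theorem~\ref{th:B}) together with the gcd condition to rule out a proper factor. Your final step phrases the gcd condition as the integrality constraint $n \mid d\,v(a_0)$ rather than as the absence of interior lattice points on the edge (the paper's Lemma~\ref{L:1}), but these are the same observation.
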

The proof of Theorem \ref{th:A} uses a fundamental result (see Theorem \ref{th:B}) of Dumas on Newton polygons. Theorem \ref{th:A} has been extended to polynomials over any valued field in the papers \cite{Kh2017,Kh23}.

The condition $(i)$ in the statement of  Theorem \ref{th:A} requires that the valuation of the leading coefficient of the underlying polynomial $f$ is zero. If  instead the valuation of a coefficient other than the leading coefficient of $f$ is zero,  can one still have any information about factorization of $f?$ What can be said about factorization of $f$ if the lower bound in the hypothesis $(ii)$ of Theorem \ref{th:A} is  allowed to occur at an index $\ell>0?$ Of course the  hypothesis $(iii)$ may require a modification in connection with that in the hypothesis $(ii)$. As we shall see in this paper, the answer to these questions is in the affirmative with appealing modifications of the hypotheses $(i)$-$(iii)$ of Theorem \ref{th:A}.
The purpose of the present article is to devise such conditions yielding new irreducibility results.  In view of this,  we first prove  a factorization result for polynomials over discrete valuation domains, which in particular yields Theorem \ref{th:A}. Further, our factorization result yields a class of irreducible polynomials having integer coefficients and also having the property that all zeros of such polynomials have the standard archimedean absolute value sufficiently large. In doing so, we use the well known Newton polygon technique. Main results of this paper are stated and discussed in Sec.~\ref{sec:1} and their proofs are presented in Sec.~\ref{sec:2}.
\section{Main results}\label{sec:1}
The main results of this paper are certainly true for polynomials over more general domains, which  require other technical tools. However, in the present article, we will restrict our attention to the polynomials over discrete valuation domains.
In view of this, our first factorization result is the following mild generalization of Theorem \ref{th:A}.
\begin{theorem}\label{th:1}
Let $(R,v)$  be a discrete valuation domain. Let $f=a_0 + a_1x + \cdots + a_nx^n \in {R}[x]$ be a primitive polynomial such that there exist indices $j$ and $\ell$ with $1\leq \ell+1 \leq j\leq n$ for which the following hold.
\begin{enumerate}[label=$(\roman*)$]
\item $v(a_j)=0$,
\item $\frac{v(a_\ell)}{j-\ell}<\frac{v(a_i)}{j-i}$ for each $i=0,1,\ldots, j-1$ with $i\neq \ell$,
\item $\gcd(v(a_\ell),~j-\ell) = 1$.
\end{enumerate}
Then any factorization $f(x)=f_1(x)f_2(x)$ of $f$ in $R[x]$ has a factor of degree $\geq j-\ell$. In particular, if $j=n$ and $\ell=0$, then the polynomial $f$ is irreducible in $R[x]$.
\end{theorem}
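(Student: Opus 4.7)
The plan is to study the Newton polygon $N(f)$---the lower convex hull of the lattice points $\{(i,v(a_i)) : 0\le i\le n,~a_i\neq 0\}$---and to apply the classical theorem of Dumas on Newton polygons of products, which states that for any factorization $f=f_1f_2$ the polygon $N(f)$ is obtained by concatenating the edges of $N(f_1)$ and $N(f_2)$ in order of increasing slope. All information about the factorization will be extracted from the specific segment $S$ joining $(\ell,v(a_\ell))$ to $(j,0)$, whose slope is $-v(a_\ell)/(j-\ell)$.

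The technical heart of the argument---and the step I expect to be the main obstacle---is to show that $S$ is itself a single edge of $N(f)$, meaning a chord that lies on the polygon and is not subdivided into finer edges. Hypothesis (ii) can be rewritten as $v(a_i)>\frac{v(a_\ell)(j-i)}{j-\ell}$ for every $i\in\{0,\dots,j-1\}\setminus\{\ell\}$, which places each such point $(i,v(a_i))$ strictly above the line supporting $S$. A short slope comparison then shows that for $i<\ell$ the chord from $(i,v(a_i))$ to $(\ell,v(a_\ell))$ has strictly smaller slope than $S$, so $(\ell,v(a_\ell))$ is a vertex of $N(f)$. Since $v(a_i)\ge 0$ for $i>j$, the polygon continues beyond $(j,0)$ with nonnegative slope, hence at least the slope of $S$; when $v(a_\ell)>0$ this inequality is strict, making $(j,0)$ a vertex and $S$ a single edge. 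The degenerate case $v(a_\ell)=0$ forces $j-\ell=1$ by (iii) and renders the degree bound trivial.

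Hypothesis (iii) furthermore implies that $-v(a_\ell)/(j-\ell)$ is in lowest terms, so the only lattice points on $S$ are its endpoints and its horizontal length equals $j-\ell$. Consequently, for any factorization $f=f_1f_2$ in $R[x]$, Dumas's theorem identifies $S$ with the concatenation of all edges of $N(f_1)$ and $N(f_2)$ of slope $-v(a_\ell)/(j-\ell)$; each such constituent edge has integer endpoints, so its horizontal length is a positive integer multiple of $j-\ell$. Since the total horizontal length is exactly $j-\ell$, all slope-$S$ content must lie within a single factor, yielding $\deg f_1\ge j-\ell$ or $\deg f_2\ge j-\ell$.

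For the specialization $j=n$, $\ell=0$, one factor has degree at least $n$ and hence exactly $n$, forcing the other to be a constant in $R$. Primitivity of $f$ makes this constant a unit of $R$, and consequently $f$ is irreducible in $R[x]$.
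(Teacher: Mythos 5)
Your proof is correct and follows essentially the same route as the paper: both identify the segment from $(\ell,v(a_\ell))$ to $(j,0)$ as an edge of $NP(f)$ with no interior lattice points (via the gcd condition) and then invoke Dumas's theorem on Newton polygons of products to force that edge, of horizontal length $j-\ell$, entirely into one factor. Your closing step (horizontal lengths of constituent edges of that slope must be multiples of $j-\ell$) is a slightly cleaner packaging of the paper's index-chasing argument, but the underlying idea is identical.
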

Note that Dumas irreducibility criterion (Theorem \ref{th:A})  is precisely the case $j=n$ and $\ell=0$ of Theorem \ref{th:1}. Further, the case $j=n$ of Theorem \ref{th:1} yields a generalization of the main result proved in \cite{W}.
\begin{remark}
The lower bound on the degree of a factor of $f$ as mentioned in Theorem \ref{th:1} is best possible in the sense that there exist polynomials attaining the said bound. For example, given a field $\K$, the degree valuation $v_\infty$ on the field of fractions $\K(x)$ of the polynomial ring $\K[x]$ is defined by setting $v_\infty(0)=\infty$, and for any $a,b\in \K[x]\setminus\{0\}$, $v_\infty(a/b)=\deg b-\deg a$. Then the valuation ring of $v_\infty$ in $\K(x)$ is the localization $(\K[1/x])_{(1/x)}$ of $\K[1/x]$ by the prime ideal $(1/x)$ of $\K[1/x]$. Consequently, $(\K[1/x]_{(1/x)},v_\infty)$ becomes a discrete valuation domain with respect to the valuation $v_\infty$ with the uniformizing parameter $1/x$ for the ring $\K[1/x]$.  Now consider the bivariate polynomial
\begin{eqnarray*}
f(1/x,y)&=&x^{-\ell}(1+y^{j-\ell})+x^{-j+\ell+1} y^\ell + y^j+y^{j+\ell}\\&&+(1-x^{-j+\ell+1})y^{2j-\ell}+y^{2j}\in \K[1/x]_{(1/x)}[y],
\end{eqnarray*}
which satisfies the hypothesis of Theorem \ref{th:1} for $1\leq \ell<j<2j=n$, $\ell\neq j-\ell$,  $a_0(x)=x^\ell=a_{j-\ell}(x)$,  $a_\ell(x)=x^{j-\ell-1}\in \K[x]$,  $a_j=1$, and $a_i=0$ for $i\not\in \{0,\ell,j-\ell,j, 2j-\ell,j+\ell,2j\}$. Here, we have $v_\infty(a_0(1/x))=\ell$, $v_\infty(a_\ell(1/x))=j-\ell-1$, and $v_\infty(a_i(1/x))=\infty$ for $i\in \{0,1,\ldots, j-1\}$ with $i\not\in\{0,\ell,j-\ell\}$. Consequently, we have $(i)$ $v_\infty(a_j(1/x))=0$,~$(ii)$ $\gcd(v_\infty(a_\ell(1/x)),j-\ell)=\gcd(j-\ell-1,j-\ell)=1$,  and
$(iii)$  $\frac{v_\infty(a_\ell(1/x))}{j-\ell}=1-\frac{1}{j-\ell}<1=\min_{i\neq \ell, i<j}\{\frac{v_\infty(a_i(1/x))}{j-i}\}$.  This in view of Theorem \ref{th:1} tells us that the polynomial $f$ has a factor of degree  $\geq (j-\ell)$. A direct computation tells us that $f$ factors as
\begin{eqnarray*}
f(1/x,y)&=&(1+y^{j-\ell})(x^{-\ell}+x^{-j+\ell+1}y^\ell+(1-x^{-j+\ell+1})y^{j}+y^{j+\ell}),
\end{eqnarray*}
with the factor  $1+y^{j-\ell}$ of degree $j-\ell$.
\end{remark}
As a consequence of Theorem \ref{th:1}, we may have the following factorization result.
\begin{corollary}\label{c:1}
Let $(R,v)$  be a discrete valuation domain. Let $f=a_0 + a_1x + \cdots + a_nx^n \in {R}[x]$ be a primitive polynomial such that there exist indices $j$ and $\ell$ with $1\leq \ell+1 \leq j\leq n$ for which the following hold.
\begin{enumerate}[label=$(\roman*)$]
\item $v(a_j)=0$,
\item $v(a_\ell)\leq \frac{{j-\ell}}{j} v(a_i)$ for each $i=0,1,\ldots, j-1$ with $i\neq \ell$,
\item $\gcd(v(a_\ell),~j-\ell) = 1$.
\end{enumerate}
Then any factorization $f(x)=f_1(x)f_2(x)$ of $f$ in $R[x]$ has a factor of degree $\geq j-\ell$. In particular, if $j=n$ and $\ell=0$, then the polynomial $f$ is irreducible in $R[x]$.
\end{corollary}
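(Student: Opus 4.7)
The plan is to deduce Corollary \ref{c:1} from Theorem \ref{th:1} by checking that hypothesis $(ii)$ of the corollary entails the strict hypothesis $(ii)$ of the theorem, namely
\[
\frac{v(a_\ell)}{j-\ell} < \frac{v(a_i)}{j-i} \quad \text{for every } i \in \{0, 1, \ldots, j-1\}\setminus\{\ell\}.
\]
Since conditions $(i)$ and $(iii)$ are identical in the two statements, such a reduction delivers both the factorization conclusion and the ``in particular'' clause at once.

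The routine part is the elementary chain
\[
\frac{v(a_\ell)}{j-\ell} \;\leq\; \frac{v(a_i)}{j} \;\leq\; \frac{v(a_i)}{j-i},
\]
whose first inequality is just the corollary's hypothesis $(ii)$ rearranged and whose second uses $j - i \leq j$ together with $v(a_i) \geq 0$. For $1 \leq i \leq j-1$ with $i \neq \ell$, the second inequality is strict as soon as $v(a_i) > 0$; if $v(a_i) = 0$ for some such $i$, then the corollary's hypothesis forces $v(a_\ell) = 0$, so by $(iii)$ we have $j - \ell = 1$, and the conclusion (a factor of degree at least $1$) holds trivially since $\deg f \geq j \geq 1$.

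The main obstacle is the case $i = 0$ with $\ell \geq 1$, where the chain can collapse to two equalities and the strict bound demanded by Theorem \ref{th:1} does not follow from the corollary's hypothesis alone. To dispatch it, I would exploit $(iii)$: the would-be equality $j \cdot v(a_\ell) = (j-\ell) \cdot v(a_0)$ together with $\gcd(v(a_\ell), j-\ell) = 1$ forces $(j-\ell) \mid j$, so $j = m(j-\ell)$ and $v(a_0) = m \cdot v(a_\ell)$ for some integer $m \geq 2$, placing $(0, v(a_0))$, $(\ell, v(a_\ell))$ and $(j, 0)$ on a single edge of the Newton polygon of $f$. In this structured situation I would bypass Theorem \ref{th:1} and invoke Dumas' decomposition theorem (Theorem \ref{th:B}) directly: primitivity from $(iii)$ forces any factor's share of this slope-$-v(a_\ell)/(j-\ell)$ edge to have horizontal extent a nonnegative multiple of $j-\ell$, and since the two shares must sum to $j = m(j-\ell)$, at least one of them is $\geq j-\ell$, yielding a factor of degree at least $j - \ell$ as required.
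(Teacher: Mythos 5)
Your reduction to Theorem \ref{th:1} is the same overall strategy the paper uses, but your execution is more careful and in fact fills a real gap in the paper's own argument. The paper's proof consists solely of the chain
\[
v(a_\ell)\;\leq\;\tfrac{j-\ell}{j}\,v(a_i)\;<\;\tfrac{j-\ell}{j}\cdot\tfrac{j}{j-i}\,v(a_i)\;=\;\tfrac{j-\ell}{j-i}\,v(a_i),
\]
with strictness asserted for every $i\neq\ell$; exactly as you observe, the middle inequality is false when $i=0$ (there $\tfrac{j}{j-i}=1$) and also when $v(a_i)=0$, so the two cases you isolate are precisely where the paper's derivation breaks down. Your patches are sound: if $v(a_i)=0$ then hypothesis $(ii)$ forces $v(a_\ell)=0$, whence $(iii)$ gives $j-\ell=1$ and the conclusion is automatic; and in the $i=0$ equality case, $(iii)$ does force $(j-\ell)\mid j$, the points $(0,v(a_0))$, $(\ell,v(a_\ell))$, $(j,0)$ become collinear on the lower hull, and Theorem \ref{th:B} together with the primitivity of the slope $-v(a_\ell)/(j-\ell)$ makes each factor's horizontal contribution to that edge a nonnegative multiple of $j-\ell$, so some factor has degree at least $j-\ell$. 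One simplification worth noting: in that last case no Newton polygon argument is needed at all, since $j=m(j-\ell)$ with $m\geq 2$ (because $\ell\geq 1$) gives $\deg f=n\geq j\geq 2(j-\ell)$, and in any factorization $f=f_1f_2$ the larger factor already has degree $\geq n/2\geq j-\ell$. In short, your proof is correct, follows the paper's intended route, and additionally repairs it; the only cost is length.
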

For a prime number $p$, let $v_p$ denote the $p$-adic valuation of the field of rational numbers with its valuation ring  $\mathbb{Z}$. Recall that $v_p(0)=\infty$, and for any nonzero integer $a$, $v_p(a)$ is the largest nonnegative integer such that $p^{v_p(a)}$ divides $a$.

Now we may have the following irreducibility criterion for a class of polynomials having integer coefficients.
\begin{theorem}\label{th:2}
Let $f=a_0 + a_1x + \cdots + a_nx^n \in \mathbb{Z}[x]$ be a primitive polynomial such that there exists a prime number $p$ and index $j$ with $1\leq j\leq n$ for which the following hold.
\begin{enumerate}[label=$(\roman*)$]
\item $v_p(a_j)=0$,
\item $\frac{v_p(a_0)}{j}<\frac{v_p(a_i)}{j-i}$ for each $i=1,\ldots,j-1$,
\item $\gcd(v_p(a_0),~j) = 1$.
\end{enumerate}
Let $d=a_0/p^{v_p(a_0)}$. If either $j=n$, or each zero $\theta$ of $f$ in ${\mathbb{C}}$ satisfies $|\theta|>d$, then  the polynomial $f$ is irreducible in $\mathbb{Z}[x]$.
\end{theorem}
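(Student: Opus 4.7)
The plan is to apply Theorem~\ref{th:1} with $\ell=0$, working over the discrete valuation ring $\mathbb{Z}_{(p)}$: any factorization $f=f_1f_2$ in $\mathbb{Z}[x]$ is also a factorization in $\mathbb{Z}_{(p)}[x]$, so one of $f_1,f_2$ must have degree at least $j$. When $j=n$, the cofactor has degree zero and, by primitivity of $f$, equals $\pm 1$; this settles that case.

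For $j<n$, I would argue by contradiction. Suppose $f=f_1f_2$ is a nontrivial factorization in $\mathbb{Z}[x]$. The strategy is to sandwich the constant term $c_0$ of the ``shorter'' factor between an upper bound $|c_0|\leq|d|$ coming from the Newton polygon and a lower bound $|c_0|>|d|$ coming from the root-size hypothesis.

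Passing to $\mathbb{Z}_p[x]$, hypotheses $(i)$--$(iii)$ force the Newton polygon of $f$ on $\{0,1,\ldots,j\}$ to be a single segment from $(0,v_p(a_0))$ to $(j,0)$ of slope $-v_p(a_0)/j$, with no interior lattice points. The slope factorization over $\mathbb{Z}_p$ then yields $f=F\cdot G$ in $\mathbb{Z}_p[x]$ with $\deg F=j$ and $v_p(F(0))=v_p(a_0)$, and since a single-slope Newton polygon with coprime data is irreducible, $F$ is irreducible over $\mathbb{Q}_p$ (the same mechanism already used in Theorem~\ref{th:1}). Hence $F$ divides one of $f_1,f_2$ in $\mathbb{Q}_p[x]$, and after swapping labels I may assume $F\mid f_1$, so that $f_2$ has degree $m$ with $1\leq m\leq n-j$, and $f_2\mid G$ in $\mathbb{Z}_p[x]$ (by primitivity of $f_2$ over $\mathbb{Z}_p$ and Gauss's lemma). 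The identity $a_0=F(0)G(0)$ then gives $v_p(G(0))=0$, whence $v_p(c_0)=0$; combined with $c_0\mid a_0$, this forces $c_0\mid d$, and in particular $|c_0|\leq|d|$.

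For the opposite inequality, each root $\beta$ of $f_2$ is a root of $f$ and thus satisfies $|\beta|>|d|$ by hypothesis; writing $f_2(x)=c_m\prod_{i=1}^{m}(x-\beta_i)$ and using $|c_m|\geq 1$ gives $|c_0|=|c_m|\prod_{i=1}^{m}|\beta_i|>|d|^m\geq|d|$, contradicting the upper bound. The only substantive technical ingredient is the slope factorization of $f$ in $\mathbb{Z}_p[x]$ together with the $\mathbb{Q}_p$-irreducibility of the factor $F$, both of which rest on exactly the Dumas--Newton polygon machinery already used in Theorem~\ref{th:1}, so no new tools are required.
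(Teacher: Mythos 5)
Your proof is correct and follows the same overall skeleton as the paper's: isolate the factor whose constant term is a $p$-adic unit, deduce that this constant term divides $d$ and hence has absolute value at most $|d|$, and contradict this with the lower bound $|c_0|>|d|^m\geq|d|$ coming from the root-size hypothesis. The difference lies in how you establish that one factor's constant term is a $p$-adic unit. The paper does this with Lemma \ref{L:3} (the $\ell=0$ case of Lemma \ref{L:2}), which applies Dumas's combination theorem (Theorem \ref{th:B}) directly to the Newton polygons of the given integer factors $f_1,f_2$ with respect to $v_p$; no completion is needed and no auxiliary factorization is constructed. You instead pass to $\mathbb{Z}_p$, invoke the slope factorization $f=FG$ with $F$ of pure slope $-v_p(a_0)/j$ and irreducible over $\mathbb{Q}_p$, and then compare $f_1f_2$ with $FG$ via Gauss's lemma. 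This works, but the existence of the slope factorization is a Hensel-type statement requiring the completeness of $\mathbb{Z}_p$; it is strictly more than Theorem \ref{th:B}, which only constrains how the Newton polygons of already-given factors fit together. So your closing claim that ``no new tools are required'' beyond what Theorem \ref{th:1} uses is a slight overstatement, though the tool you import is standard and true, and the paper's route via Lemma \ref{L:3} is the lighter one. (Both your argument and the paper's implicitly take $d>0$, i.e.\ read $d=|a_0|/p^{v_p(a_0)}$; otherwise the hypothesis $|\theta|>d$ is vacuous and the final inequalities lose force. This is an issue with the statement rather than with your proof.)
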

\begin{remarks}\textbf{(1).}\label{r1} Let $f=a_0+a_1x+\cdots+a_n x^n\in \mathbb{Z}[x]$ be  such that
\begin{eqnarray*}
|a_0|>|a_1| d + \cdots + |a_{n-1}|d^{n-1}+ |a_n| d^n >0,
\end{eqnarray*}
for some positive real number $d$. If $x\in \mathbb{C}$ with $|x|\leq d$, then using the hypothesis we have
    \begin{eqnarray*}
|f(d)|\geq |a_0|-|a_1|d-|a_2|d^2-\cdots-|a_n|d^n>0.
    \end{eqnarray*}
This shows that each zero $\theta$ of $f$ must satisfy $|\theta|> d$.

\textbf{(2).}\label{r2} Let $f=a_0+a_1x+\cdots+a_n x^n\in \mathbb{Z}[x]$ be a polynomial satisfying conditions analogous to that of the well known Enestr\"om-Kakeya's theorem (see  \cite{Gardner} for a comprehensive review of Enestr\"om-Kakeya's theorem and its generalizations), that is, the coefficients of $f$ satisfy  
\begin{eqnarray*}
a_0\geq a_1\geq \cdots\geq a_{n-1}\geq a_n\geq 1.
\end{eqnarray*}
 We show that each zero of such a polynomial $f$ lies outside the open unit disk in the complex plane. So, assume on the contrary that there exists a zero $\theta \in \mathbb{C}$ of $f$ with $|\theta|<1$. We observe that
\begin{eqnarray*}
(x-1)f(x)=-a_0+\sum_{i=1}^n (a_{i-1}-a_i)x^i+a_n x^{n+1},
\end{eqnarray*}
which on taking $x=\theta$ with $f(\theta)=0$, we arrive at the following:
 \begin{eqnarray*}
 a_0=\sum_{i=1}^n (a_{i-1}-a_i)\theta^i+a_n \theta^{n+1}&\leq& \sum_{i=1}^n (a_{i-1}-a_i)|\theta|^i+a_n |\theta|^{n+1}\\&<&\sum_{i=1}^n (a_{i-1}-a_i)+a_n=a_0,
  \end{eqnarray*}
which is absurd. Thus, each zero $\theta\in \mathbb{C}$ of $f$ satisfies $|\theta|\geq 1$. If  such a polynomial $f$ has no cyclotomic factor, then each zero $\theta$ of $f$  satisfies $|\theta|>1$.

\textbf{(3).} If $f$ is a polynomial as in the preceding Remarks (1) or (2), and  the coefficients of $f$ satisfy the conditions $(i)$-$(iii)$ of Theorem \ref{th:2}, then it follows that such a polynomial $f$ is irreducible in $\mathbb{Z}[x]$.
\end{remarks}
 The next two irreducibility criteria may be deduced from Theorem \ref{th:2}.
\begin{corollary}\label{c:3}
Let $f=a_0 + a_1x + \cdots + a_nx^n \in \mathbb{Z}[x]$ be a primitive polynomial such that  there exists a prime number $p$ and index $j$ with $1\leq j\leq n$ for which the following hold.
\begin{enumerate}[label=$(\roman*)$]
\item $v_p(a_j)=0$,
\item $v_p(a_0)\leq v_p(a_i)$ for each $i=1,\ldots,j-1$,
\item $\gcd(v_p(a_0),~j) = 1$.
\end{enumerate}
Let $d=a_0/p^{v_p(a_0)}$. If either $j=n$, or each zero $\theta$ of $f$ in ${\mathbb{C}}$ satisfies $|\theta|>d$, then   $f$ is irreducible in $\mathbb{Z}[x]$.
\end{corollary}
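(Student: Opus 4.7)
The plan is to deduce Corollary \ref{c:3} directly from Theorem \ref{th:2}. Conditions (i) and (iii) of the two statements coincide verbatim, so the only work is to show that the weaker hypothesis $v_p(a_0) \leq v_p(a_i)$ of the corollary, when combined with (iii), implies the slope inequality $v_p(a_0)/j < v_p(a_i)/(j-i)$ required by the theorem. Once this comparison is in place, the conclusion of Theorem \ref{th:2} (including the dichotomy between $j = n$ and $|\theta| > d$ for each complex zero $\theta$) carries over word for word.

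First I would dispose of the degenerate case $j = 1$: the index range $1 \leq i \leq j-1$ is empty, so hypothesis (ii) of Theorem \ref{th:2} holds vacuously and Theorem \ref{th:2} applies at once. For the main case $j \geq 2$, I would exploit condition (iii) to rule out $v_p(a_0) = 0$: were the valuation zero, $\gcd(v_p(a_0), j) = \gcd(0, j) = j \geq 2$, contradicting (iii). Hence $v_p(a_0) \geq 1$, and for each $i \in \{1, \ldots, j-1\}$ one obtains
\[
\frac{v_p(a_0)}{j} < \frac{v_p(a_0)}{j-i} \leq \frac{v_p(a_i)}{j-i},
\]
where the strict inequality on the left uses $v_p(a_0) \geq 1$ together with $0 < j-i < j$, and the second inequality is condition (ii) of the corollary. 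This is exactly hypothesis (ii) of Theorem \ref{th:2}, and invoking that theorem finishes the proof.

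No real obstacle arises; the only subtlety worth isolating is recognising that condition (iii) is strong enough to force $v_p(a_0) \geq 1$ whenever $j \geq 2$, which is precisely what promotes the weak inequality of the corollary's hypothesis to the strict slope inequality required by Theorem \ref{th:2}.
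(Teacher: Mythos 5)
Your proposal is correct and follows essentially the same route as the paper: both reduce Corollary \ref{c:3} to Theorem \ref{th:2} by promoting the weak inequality $v_p(a_0)\leq v_p(a_i)$ to the strict slope inequality $v_p(a_0)/j<v_p(a_i)/(j-i)$. You are in fact slightly more careful than the paper, which writes $v_p(a_i)<\frac{j}{j-i}v_p(a_i)$ without explicitly noting that condition $(iii)$ forces the relevant valuations to be positive when $j\geq 2$; your observation that $\gcd(v_p(a_0),j)=1$ rules out $v_p(a_0)=0$ supplies exactly the justification the paper leaves implicit.
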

We note that Corollary \ref{c:3} is precisely one of the main results proved in \cite{J-S-3}.
The following irreducibility criterion might be of independent interest.
\begin{corollary}\label{c:4}
Let $f=a_0 + a_1x + \cdots + a_nx^n \in \mathbb{Z}[x]$ be a primitive polynomial such that $a_0=\pm p^k d$ for a positive integer $d$ and a prime number $p\nmid d$. Let there be a  positive integer $m$ with $(km+1)\leq n$ for which
\begin{eqnarray*}
p^t\mid a_{(k-t) m+s},~p^{t+1}\nmid a_{((k-t) m+s},
\end{eqnarray*}
for every $s=1, \hdots, m$, and for every $t=1,\hdots,k$. Let $p$ does not divide $a_{km+1}$.
If either $km+1=n$, or each zero $\theta$ of $f$ in ${\mathbb{C}}$ satisfies $|\theta|>d$, then  $f$ is irreducible in $\mathbb{Z}[x]$.
\end{corollary}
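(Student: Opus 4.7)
The plan is to deduce this directly from Theorem \ref{th:2} by taking the index to be $j=km+1$ and the prime to be the given $p$, and then translating the hypotheses of the corollary into the three conditions (i)--(iii) of Theorem \ref{th:2}. Condition (i) is immediate from the hypothesis $p\nmid a_{km+1}$, which gives $v_p(a_j)=0$. Condition (iii) is also immediate: since $a_0=\pm p^k d$ with $p\nmid d$, we have $v_p(a_0)=k$, and $\gcd(k,\,km+1)=\gcd(k,1)=1$.

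The substantive step is verifying condition (ii). I would first observe that every $i\in\{1,2,\ldots,km\}$ admits a unique representation $i=(k-t)m+s$ with $t\in\{1,\ldots,k\}$ and $s\in\{1,\ldots,m\}$, obtained by dividing $i-1$ by $m$. The divisibility hypothesis in the corollary then says that $v_p(a_i)=t$ exactly. With $j-i=tm+1-s$ and $v_p(a_0)/j=k/(km+1)$, the inequality required in Theorem \ref{th:2}(ii) becomes
\begin{equation*}
\frac{k}{km+1}<\frac{t}{tm+1-s},
\end{equation*}
which, after cross-multiplying (the denominators are positive because $s\leq m$), reduces to $k-ks<t$. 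Since $t\geq 1$ and $s\geq 1$, this holds in every case, and so (ii) is established.

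With the three hypotheses of Theorem \ref{th:2} in place, the conclusion follows: the value $d$ appearing there equals $a_0/p^{v_p(a_0)}=\pm d$, so its absolute value coincides with the positive integer $d$ of the corollary, and hence the condition ``$|\theta|>d$ for every complex zero $\theta$ of $f$'' matches in both statements. Thus, whether $j=km+1=n$ or the zero condition is imposed, Theorem \ref{th:2} yields irreducibility of $f$ in $\mathbb{Z}[x]$. I do not expect any genuine obstacle here; the only point that requires a moment of care is the bookkeeping for the parametrization $i=(k-t)m+s$ and the resulting reduction of the inequality in (ii).
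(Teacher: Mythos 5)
Your proposal is correct and follows essentially the same route as the paper: take $j=km+1$ in Theorem \ref{th:2}, note $\gcd(k,km+1)=1$, and verify hypothesis (ii) by the cross-multiplication $k(tm+1-s)-t(km+1)=-k(s-1)-t<0$. Your added remarks on the unique representation $i=(k-t)m+s$ and on matching the constant $d$ are sound bookkeeping that the paper leaves implicit.
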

\begin{example}
For a prime number $p$  and positive integers $k$ and $m$, the polynomial
\begin{eqnarray*}
f= p^k+px\Bigl(\frac{x^m-1}{x-1}\Bigr) \Bigl(\frac{x^{km}-p^k}{x^m-p}\Bigr)+x^{km+1}+x^{km+2}\in \mathbb{Z}[x]
\end{eqnarray*}
satisfies the hypothesis of Corollary \ref{c:4} for $n=km+2$, since we may express $f$ as
\begin{eqnarray*}
f&=&p^k+p^k\phi_{m}(x)+p^{k-1}x^m \phi_m(x)+\cdots+px^{(k-1)m}\phi_m(x)+x^{km+1}+x^{km+2},
\end{eqnarray*}
where we define
\begin{eqnarray*}
\phi_m(x)=x+x^2+\cdots+x^m,
\end{eqnarray*}
so that here, $a_{(k-t)m+s}=p^t$ for all $t=1,\ldots,k$ and $s=1,\ldots, m$. Since the coefficients of $f$ are monotonically decreasing positive integers starting from the constant term to the leading term and that $f$ has no cyclotomic factor, it follows  that all zeros of $f$ lie outside the closed unit disk in the complex plane. Thus, $f$ must be irreducible.
\end{example}
Finally, we may have the following factorization result.
\begin{theorem}\label{th:5}
Let $f=a_0+a_1 x +\cdots+a_nz^n\in \mathbb{Z}[x]$ be such that $a_0=\pm p_1^{k_1}\cdots p_r^{k_r}$, $r\geq 2$, where $p_1, \ldots, p_r$ are $r$ distinct primes and $k_1,\ldots,k_r$ are all positive integers. Suppose that for each $i=1,\ldots, r$, there exists  a positive integer $j_i\leq n$ for which the following hold.
\begin{enumerate}
\item[(i)] $v_{p_i}(a_{j_i})=0$,
\item[(ii)] $\frac{k_i}{j_i}<\frac{v_{p_i}(a_t)}{j_i-t}$ for each $t=1,\ldots,j_i-1$,
\item[(iii)] $\gcd(k_i,j_i)=1$.
\end{enumerate}
If each zero $\theta$ of $f$ in $\mathbb{C}$ satisfies $|\theta|>1$ for all $i=1,\ldots, r$, then  the polynomial $f$ is a product of at most $r$ irreducible factors in $\mathbb{Z}[x]$.
\end{theorem}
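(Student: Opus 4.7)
The strategy is to apply the Newton-polygon argument underlying Theorem~\ref{th:1} simultaneously at each prime $p_1,\ldots,p_r$, and then to exploit the archimedean constraint $|\theta|>1$ to rule out an excess of irreducible factors. A preliminary observation is that $f$ is primitive: any prime $q$ dividing $\gcd(a_0,\ldots,a_n)$ would divide $a_0=\pm p_1^{k_1}\cdots p_r^{k_r}$, forcing $q=p_i$ for some $i$, but then $q\mid a_{j_i}$ contradicts $v_{p_i}(a_{j_i})=0$. Hence any factorization in $\mathbb{Z}[x]$ has the form $f=\pm g_1\cdots g_s$ with each $g_m$ a non-constant irreducible, and the task reduces to showing $s\le r$.

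Fix $i\in\{1,\ldots,r\}$ and examine the $p_i$-adic Newton polygon of $f$. Hypotheses $(i)$ and $(ii)$ force its leftmost edge to be the segment from $(0,k_i)$ to $(j_i,0)$ of slope $-k_i/j_i$, and every subsequent edge then has non-negative slope (since all remaining lattice points $(t,v_{p_i}(a_t))$ for $t>j_i$ lie in the half-plane $y\ge 0$ with the polygon starting at height $0$). Invoking Dumas's Newton-polygon decomposition for $f=\pm g_1\cdots g_s$, the total horizontal length of slope-$(-k_i/j_i)$ edges in the factors equals $j_i$, and each such edge, having lattice endpoints, has horizontal length divisible by $j_i$ (this uses $\gcd(k_i,j_i)=1$). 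Thus exactly one factor---call it $g_{m_i}$---carries an edge of slope $-k_i/j_i$, of horizontal length $j_i$, which must be its leftmost edge. The inequalities $v_{p_i}(g_{m_i}(0))\ge k_i$, $v_{p_i}(g_m(0))\ge 0$ for $m\ne m_i$, together with the identity $\sum_m v_{p_i}(g_m(0))=v_{p_i}(a_0)=k_i$, then force $v_{p_i}(g_{m_i}(0))=k_i$ and $v_{p_i}(g_m(0))=0$ for all $m\ne m_i$.

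Now suppose for contradiction that $s>r$. By pigeonhole there exists $m^\star\in\{1,\ldots,s\}\setminus\{m_1,\ldots,m_r\}$, and the previous paragraph yields $v_{p_i}(g_{m^\star}(0))=0$ for every $i$. Since $g_{m^\star}(0)$ divides $a_0=\pm p_1^{k_1}\cdots p_r^{k_r}$ and is coprime to every $p_i$, necessarily $g_{m^\star}(0)=\pm 1$. On the other hand, every root $\theta$ of $g_{m^\star}$ is a root of $f$, so $|\theta|>1$; writing $g_{m^\star}(x)=c\prod_\theta(x-\theta)$ with leading coefficient $c\in\mathbb{Z}\setminus\{0\}$, we obtain $|g_{m^\star}(0)|=|c|\prod_\theta|\theta|>1$, contradicting $|g_{m^\star}(0)|=1$. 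Hence $s\le r$.

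The main obstacle is the uniqueness assertion for $g_{m_i}$: it is this single-factor allocation, forced by $\gcd(k_i,j_i)=1$ via the divisibility $j_i\mid L$ of horizontal edge-lengths, that opens the pigeonhole for the $r$ distinct primes. The archimedean hypothesis $|\theta|>1$ then plays the role of the large-root condition $|\theta|>d$ in Theorem~\ref{th:2}, converting ``constant term equals $\pm 1$'' into the strict inequality $|g_{m^\star}(0)|>1$ that closes the contradiction.
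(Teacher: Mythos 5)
Your proof is correct and follows essentially the same route as the paper: both arguments rest on the fact (via Dumas's theorem and $\gcd(k_i,j_i)=1$) that the slope $-k_i/j_i$ edge must be carried by a single factor, so that for each $i$ exactly one irreducible factor absorbs all of $p_i^{k_i}$, combined with a pigeonhole over the $r$ primes and the hypothesis $|\theta|>1$. The only cosmetic difference is that you locate a factor with constant term $\pm1$ and contradict $|\theta|>1$ directly, whereas the paper groups the irreducible factors into two blocks sharing a prime $p_i$ and contradicts its Lemma~\ref{L:3}; the content is the same.
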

Note that factorization results analogous to Corollaries \ref{c:3} \& \ref{c:4} may be devised as particular cases of  Theorem \ref{th:5}.
\begin{remark}
Here also, the lower bound mentioned in Theorem \ref{th:5} is best possible, since the polynomial
\begin{eqnarray*}
f=x^r+\sigma_1 x^{r-1}+\cdots+\sigma_{r-1}x+\sigma_r=(x+{p_1}^{k_1})\cdots (x+{p_r}^{k_r})\in \mathbb{Z}[x],
\end{eqnarray*}
where $\sigma_i=\sum_{1\leq t_1<t_2<\cdots<t_{i}\leq n}p_{t_1}^{k_{t_1}}\cdots p_{t_i}^{k_{t_i}}$ for each $i=1,\ldots, r$ satisfies the hypothesis of Theorem \ref{th:5} for $n=r$ with  $j_1=j_2=\cdots=j_r=1$. Since $\sigma_i\leq \sigma_{i+1}$ for each $i=1,\ldots, r-1$, the coefficients of $f$ are strictly decreasing positive integers starting with the constant term, it follows that  $f$ has no cyclotomic factor and each zero $\theta\in \mathbb{C}$ of $f$ satisfies $|\theta|>1$. Thus, $f$ is a product of at most $r$ irreducible polynomials in $\mathbb{Z}[x]$, where we see that $f$ is actually a product  of exactly $r$ irreducible factors.
\end{remark}
\section{Proofs}\label{sec:2}
To prove our main results, we will recall some notations and definitions. In view of this, if $(R,v)$ is a discrete valuation domain, then for any polynomial $f=a_0+a_1x+\cdots+a_nx^n \in R[x]$ of degree $n\geq 1$, the Newton polygon $NP(f)$ of $f$ with respect to $v$ is defined as the lower convex hull of the points of the set
\begin{eqnarray*}
\{(i,v(a_i))~|~i=0,1,\ldots, n\}
\end{eqnarray*}
in the cartesian plane $\mathbb{R}^2$. By definition, $NP(f)$ is a polygonal path in $\mathbb{R}^2$ where each segment of this path is called an edge of $NP(f)$ and the point of intersection of two adjacent (non-collinear) edges on $NP(f)$ is called a vertex of $NP(f)$. Note that the sum of projections of all edges of $NP(f)$ on the $x$-axis in the cartesian plane $\mathbb{R}^2$ is equal to $\deg f$.

The following fundamental result of Dumas connects factorization of polynomials with the geometry of Newton polygons, and it will be used in our proofs.
\begin{thmx}[Dumas \cite{D}]\label{th:B}
Let $(R,v)$ be a discrete valuation domain. Let $f_1$ and $f_2$ be nonconstant polynomials in ${R}[x]$ with $f_1(0)f_2(0)\neq 0$. If $f(x)=f_1(x)f_2(x)$, then the edges in the Newton polygon  of $f$ with respect to $v$ may be formed by constructing a polygonal path composed by translates of all the edges that appear
in the Newton polygons of $f_1$ and $f_2$ with respect to $v$, using
exactly one translate for each edge, in such a way as to form a
polygonal path with increasing slopes.
\end{thmx}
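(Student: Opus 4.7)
The plan is to apply the Newton polygon machinery underlying Theorem \ref{th:B} once for each prime $p_i$, extract a unique irreducible factor that ``absorbs'' the full $p_i$-contribution to the constant term, and then exhibit a surjection from $\{1,\dots,r\}$ onto the set of irreducible factors of $f$.

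I would first observe that $f$ is automatically primitive: any prime dividing the content of $f$ would divide $a_0=\pm p_1^{k_1}\cdots p_r^{k_r}$ and hence equal some $p_i$; but $(i)$ asserts $v_{p_i}(a_{j_i})=0$, ruling this out. Thus in any factorization $f=f_1\cdots f_s$ into irreducibles in $\mathbb{Z}[x]$, each $f_j$ is non-constant, and since $f(0)\neq 0$, each $f_j(0)\neq 0$, so Theorem \ref{th:B} applies.

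Next, fix $i\in\{1,\dots,r\}$ and consider the Newton polygon of $f$ relative to $v_{p_i}$. Conditions $(i)$ and $(ii)$ force its leftmost edge $E_i$ to run from $(0,k_i)$ to $(j_i,0)$ with slope $-k_i/j_i$, while $(iii)$ makes this slope a fraction in lowest terms, so $E_i$ has no interior lattice point. Iterating Theorem \ref{th:B}, the edges of the Newton polygon of $f$ relative to $v_{p_i}$ are obtained by concatenating, in order of increasing slope, translates of all edges of the Newton polygons of $f_1,\dots,f_s$. Every edge contributing to $E_i$ therefore has slope $-k_i/j_i$ and integer endpoints, hence horizontal length a positive integer multiple of $j_i$; as these lengths sum to $j_i$, there is a unique index $j_i^{*}\in\{1,\dots,s\}$ whose Newton polygon contributes a single edge of slope $-k_i/j_i$ and length $j_i$. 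No $f_j$ can carry an edge of slope strictly less than $-k_i/j_i$, so this edge is actually the leftmost edge of the Newton polygon of $f_{j_i^{*}}$, starting at $(0,v_{p_i}(f_{j_i^{*}}(0)))$ and ending at height $v_{p_i}(f_{j_i^{*}}(0))-k_i\ge 0$. Together with $\sum_{j}v_{p_i}(f_j(0))=v_{p_i}(a_0)=k_i$ and $v_{p_i}(f_j(0))\geq 0$, this forces
\begin{equation*}
v_{p_i}(f_{j_i^{*}}(0))=k_i \quad\text{and}\quad v_{p_i}(f_j(0))=0 \text{ for every } j\neq j_i^{*}.
\end{equation*}

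Finally, I would define $\phi\colon\{1,\dots,r\}\to\{1,\dots,s\}$ by $\phi(i)=j_i^{*}$ and show $\phi$ is surjective, giving $s\leq r$. If some $j_0$ lay outside the image, the displayed identity would give $v_{p_i}(f_{j_0}(0))=0$ for every $i$, so the nonzero integer $f_{j_0}(0)$, whose prime divisors must lie in $\{p_1,\dots,p_r\}$, would satisfy $f_{j_0}(0)=\pm 1$. However, writing $f_{j_0}(x)=c\prod_{\alpha}(x-\alpha)$ over $\mathbb{C}$ with $c\in\mathbb{Z}\setminus\{0\}$ and $\deg f_{j_0}\geq 1$, the hypothesis that every zero of $f$ has modulus $>1$ yields $|f_{j_0}(0)|=|c|\prod_{\alpha}|\alpha|>1$, contradicting $f_{j_0}(0)=\pm 1$. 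The main delicacy is the iteration of Theorem \ref{th:B} across several factors and the observation, driven entirely by $\gcd(k_i,j_i)=1$, that $E_i$ cannot be split among distinct factors; once that is in place the surjectivity argument is clean.
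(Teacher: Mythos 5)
Your proposal does not prove the statement it was supposed to prove. The statement in question is Theorem \ref{th:B} itself --- Dumas's structural result that the Newton polygon of a product $f=f_1f_2$ is assembled, edge by edge, from translates of the edges of $NP(f_1)$ and $NP(f_2)$ arranged in order of increasing slope. What you have written is instead a proof of Theorem \ref{th:5} (the factorization of $f$ into at most $r$ irreducible factors when $a_0=\pm p_1^{k_1}\cdots p_r^{k_r}$), and it \emph{invokes} Theorem \ref{th:B} as a black box at the crucial step (``Iterating Theorem \ref{th:B}, the edges of the Newton polygon of $f$ relative to $v_{p_i}$ are obtained by concatenating\ldots''). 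As an argument for Theorem \ref{th:B} this is circular; as an argument for Theorem \ref{th:5} it is addressed to the wrong statement. For what it is worth, the paper does not reprove Theorem \ref{th:B} either --- it is quoted as a classical result of Dumas --- and the paper's own proof of Theorem \ref{th:5} takes a lighter route than yours: it does not track edges of the Newton polygons of all $s$ factors, but simply groups the irreducible factors into two blocks and applies Lemma \ref{L:3} (the case $\ell=0$ of Lemma \ref{L:2}) to the resulting two-factor splitting to contradict $s>r$.

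If you actually want to prove Theorem \ref{th:B}, you need an argument about how valuations of the coefficients of a product behave, not about integer factorizations of $a_0$. The standard routes are: (a) pass to an extension in which $f_1$ and $f_2$ split into linear factors, observe that the slopes of $NP(g)$ record the valuations of the roots of $g$ with multiplicity, and note that the multiset of roots of $f$ is the union of those of $f_1$ and $f_2$; or (b) argue directly from $a_k=\sum_{i+j=k}b_ic_j$ together with the ultrametric inequality and the convexity of the lower hulls, showing that each vertex of the candidate concatenated path is attained with equality while all other points lie on or above it. Either of these supplies the missing content; your current text supplies none of it.
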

The following well known result will also be used in the sequel.
\begin{lemma}\label{L:1}
Let $(R,v)$ be a discrete valuation domain. Let $f\in R[x]$ be a nonconstant polynomial. Corresponding to a segment $PQ$ joining the lattice points $P(a_1,v(a_1))$ and $Q(a_2,v(a_2))$ in the cartesian plane $\mathbb{R}^2$, the number of lattice points on the segment $PQ$ is equal to
\begin{eqnarray*}
1+\gcd(|a_1-a_2|, |v(a_1)-v(a_2)|).
\end{eqnarray*}
\end{lemma}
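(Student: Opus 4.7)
The plan is to reduce to counting rational parameter values in $[0,1]$ that yield lattice points on a parameterized segment. First, I would translate coordinates so that $P$ sits at the origin; since the set of lattice points on the segment $PQ$ and the quantity $1+\gcd(|a_1-a_2|,|v(a_1)-v(a_2)|)$ are both invariant under integer translations, this loses no generality. After this reduction, $Q$ is at $(A,B):=(a_2-a_1,\,v(a_2)-v(a_1))$ and the segment is $\{(tA,tB):t\in[0,1]\}$. Setting $d:=\gcd(|A|,|B|)$ and writing $A=dA'$, $B=dB'$ with $\gcd(|A'|,|B'|)=1$ (using the convention $\gcd(0,n)=n$ to cover the degenerate horizontal/vertical cases), the problem becomes counting those $t\in[0,1]$ for which both $tA$ and $tB$ are integers.

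Next, I would establish a bijection between these values of $t$ and the set $\{0,\,1/d,\,2/d,\ldots,d/d\}$. For the forward direction, suppose $tA,tB\in\mathbb{Z}$ and write $t=p/q$ in lowest terms; then $q\mid A$ and $q\mid B$, which forces $q\mid d$, so $t$ is an integer multiple of $1/d$. For the reverse direction, each $t=k/d$ with $0\leq k\leq d$ produces $tA=kA'\in\mathbb{Z}$ and $tB=kB'\in\mathbb{Z}$, so the corresponding point of the segment lies in $\mathbb{Z}^2$. This gives exactly $d+1$ admissible values of $t$, and hence exactly $1+\gcd(|A|,|B|)=1+\gcd(|a_1-a_2|,|v(a_1)-v(a_2)|)$ lattice points on $PQ$, which is the claimed formula.

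The only real obstacle is bookkeeping. One must handle the degenerate cases $A=0$ and $B=0$ (vertical and horizontal segments) and the possibility that $a_1>a_2$ or $v(a_1)>v(a_2)$; the clean way to sidestep both issues at once is to adopt $\gcd(0,n)=n$ and to work consistently with absolute values, after which the parameterization argument applies uniformly. I would conclude with the observation that the hypothesis that $(R,v)$ is a discrete valuation domain, and the identification of the $y$-coordinates with valuations of coefficients of $f$, play no role whatsoever in the argument: the lemma is a purely combinatorial-geometric fact about segments joining two lattice points in $\mathbb{R}^2$, stated in the notation that will be convenient when it is applied to Newton polygons in the proofs of Theorems~\ref{th:1}--\ref{th:5}.
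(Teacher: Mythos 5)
Your proof is correct and takes essentially the same route as the paper: both parameterize the segment and show that integrality of both coordinates forces the parameter to run over exactly $d+1$ equally spaced values, where $d=\gcd(|a_1-a_2|,|v(a_1)-v(a_2)|)$. The only difference is cosmetic --- the paper uses $x$ itself as the parameter via the explicit line equation, while you translate $P$ to the origin and use $t\in[0,1]$, which has the minor advantage of handling the vertical case $a_1=a_2$ without dividing by $a_1-a_2$.
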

\begin{proof}[\bf Proof of Lemma \ref{L:1}] If we let $d=\gcd(|a_1-a_2|, |v(a_1)-v(a_2)|)$, then we may write
the  equation of the line joining the given points $P$ and $Q$ as
 \begin{eqnarray*}
 y=v(a_1)+\frac{v(a_1)-v(a_2)}{a_1-a_2}(x-a_1)=v(a_1)+\frac{(v(a_1)-v(a_2))}{d}\frac{(x-a_1)}{(a_1-a_2)/d}.
 \end{eqnarray*}
 Consequently, $(x,y)\in (\mathbb{Z}\times \mathbb{Z})\cap PQ$ if and only if $\frac{(x-a_1)}{(a_1-a_2)/d}\in \mathbb{Z}$, and that $a_1\leq x\leq a_2$. In view of this, we find that all lattice points $(x,y)$ lying on $PQ$ are explicitly given by
 \begin{eqnarray*}
 x=a_1+i\frac{(a_2-a_1)}{d},~y=v(a_1)+i\frac{v(a_2)-v(a_1)}{d},~i=0,1,\ldots,d,
  \end{eqnarray*}
  which are exactly $1+d$ in number.
\end{proof}
In view of Lemma \ref{L:1}, we note that the segment $PQ$ of $NP(f)$ has no lattice point on it other than the vertex points $P$ and $Q$ if and only if $\gcd(|a_1-a_2|, |v(a_1)-v(a_2)|)=1$.
\begin{proof}[\bf Proof of Theorem \ref{th:1}] By the hypotheses $(i)$ and $(ii)$ of the theorem, $NP(f)$ has an edge of negative slope joining the vertices $P(\ell,v(a_\ell))$ and $Q(j,0)$. In view of the hypothesis $(iii)$, we have from Lemma \ref{L:1} that the segment $PQ$ has no lattice point on it other than the end points $P$ and $Q$. By Theorem \ref{th:B}, the segment $PQ$ on $NP(f)$ is a translate of unique edge of Newton polygon of one of $f_1$ and $f_2$. Suppose that $PQ$ is a translate of an edge of $NP(f_1)$. Then $\deg f_1$ must be at least the length of the horizontal projection of $PQ$, which is $j-\ell$. 
\end{proof}
\begin{proof}[\bf Proof of Corollary \ref{c:1}]
The given polynomial satisfies all the hypothesis of Theorem \ref{th:1}. To see this, we only need to verify the hypothesis $(ii)$ of Theorem \ref{th:1} for $f$. We proceed as follows. Since $v(a_\ell)\leq \frac{j-\ell}{j}v(a_i)$ for each $i=0,1,\ldots, j-1$ with $i\neq \ell$, we have
\begin{eqnarray*}
v(a_\ell)\leq \frac{j-\ell}{j}v(a_i)<\frac{j-\ell}{j}\times \frac{j}{j-i}v(a_i)=\frac{j-\ell}{j-i}v(a_i),~i=0,1,\ldots, j-1,~i\neq \ell,
\end{eqnarray*}
which is precisely the hypothesis $(ii)$ of Theorem \ref{th:1}. So, any factorization $f(x)=f_1(x)f_2(x)$ of $f$ in $R[x]$ has a factor of degree $\geq j-\ell$.
\end{proof}
Now we may have the following factorization result, which for the case $\ell=0$ will be used in proving Theorems \ref{th:2} and \ref{th:5} and it might be of independent interest as well.
\begin{lemma}\label{L:2}
Let $(R,v)$  be a discrete valuation domain. Let $f=a_0 + a_1x + \cdots + a_nx^n \in {R}[x]$ be such that there exist indices  $j$ and $\ell$ with $1\leq \ell+1\leq j\leq n$ for which the following hold.
\begin{enumerate}[label=$(\roman*)$]
\item $v(a_j)=0$,
\item $\frac{v(a_\ell)}{j-\ell}<\frac{v(a_i)}{j-i}$ for each $i=0,\ldots, j-1$ with $i\neq \ell$,
\item $\gcd(v(a_\ell),~j-\ell) = 1$,
\item[]If $\ell>1$, then each of the following two conditions also holds.
\item  $\frac{v(a_0)-v(a_\ell)}{\ell}>\frac{v(a_0)-v(a_i)}{i}$ for each $i=1,\ldots, \ell-1$,
\item  $\gcd(v(a_0)-v(a_\ell),\ell)=1$.
\end{enumerate}
Then for any factorization $f(x)=f_1(x)f_2(x)$ of $f$ in $R[x]$ one of $v(f_1(0))$ and $v(f_2(0))$ belongs to the set $\{0,~v(a_\ell)\}$.
\end{lemma}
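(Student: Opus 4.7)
My plan is to apply Dumas's theorem (Theorem \ref{th:B}) to the Newton polygon $NP(f)$, whose leftmost structure is pinned down by the hypotheses together with Lemma \ref{L:1}.

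First I would identify two key edges of $NP(f)$. By (i) and (ii), the segment $E_1$ from $P=(\ell,v(a_\ell))$ to $Q=(j,0)$ is an edge of slope $-v(a_\ell)/(j-\ell)$, and by (iii) with Lemma \ref{L:1}, $E_1$ has no interior lattice point. When $\ell\ge 2$, hypotheses (iv) and (v) analogously produce an edge $E_0$ from $(0,v(a_0))$ to $P$ of slope $-(v(a_0)-v(a_\ell))/\ell$ with no interior lattice point; for $\ell=1$ the segment has length $1$ in $x$ and is trivially free of interior lattice points; for $\ell=0$ only $E_1$ is relevant.

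Next I would invoke Theorem \ref{th:B}: writing $f=f_1f_2$, $NP(f)$ is formed by sorting and concatenating translates of the edges of $NP(f_1)$ and $NP(f_2)$ in increasing slope. The absence of interior lattice points on $E_0$ and $E_1$ forces each of them to be the translate of a single edge coming from exactly one of the $NP(f_i)$, since any fusion of two slope-matching pieces drawn from both factors would create an interior lattice point at the join. Moreover, the edge of $NP(f_i)$ translating to $E_0$ must be the leftmost edge of $NP(f_i)$, because $E_0$ carries the most negative slope appearing in $NP(f)$.

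Assuming $E_0$ comes from $NP(f_1)$ and $E_1$ from $NP(f_2)$ (the symmetric case follows by interchanging the factors), the translate of $E_0$ starts at $(0,v(f_1(0)))$ and descends by $v(a_0)-v(a_\ell)$, while the translate of $E_1$ starts at $(0,v(f_2(0)))$ and descends by $v(a_\ell)$. Nonnegativity of valuations in $R$ gives $v(f_1(0))\ge v(a_0)-v(a_\ell)$ and $v(f_2(0))\ge v(a_\ell)$; since these sum to $v(a_0)$, both inequalities are equalities, yielding $v(f_2(0))=v(a_\ell)$. The main obstacle I anticipate is the remaining case where $E_0$ and $E_1$ are both contributed by the same factor $NP(f_i)$: the combined descent then forces $v(f_i(0))=v(a_0)$ and $v(f_{3-i}(0))=0$, which in the $\ell=0$ case already gives the desired $v(f_i(0))=v(a_\ell)$, but in the $\ell\ge 1$ case requires a further degree-and-slope argument tracking the remaining edges of $NP(f_1)$ past $(j,0)$ in order either to reduce to the previous case or to recover the alternative conclusion $v(f_i(0))=v(a_\ell)$.
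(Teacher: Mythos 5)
Your identification of the two lattice-point-free edges $E_0$ and $E_1$, your appeal to Theorem \ref{th:B}, and your treatment of the case in which $E_0$ and $E_1$ are contributed by different factors all coincide with the paper's own argument, and those parts are sound; in particular your $\ell=0$ argument (the case recorded separately as Lemma \ref{L:3}, and the only case the paper later uses) is complete. The genuine gap is precisely the case you flag at the end and leave open, namely that $E_0$ and $E_1$ are both translates of edges of the \emph{same} $NP(f_i)$, and no ``further degree-and-slope argument'' will close it: in that configuration the conclusion of Lemma \ref{L:2} is simply false for $\ell\geq 1$. Concretely, let $\pi$ be a uniformizer of $R$ and take $f_1=\pi^3+\pi x+x^3$ and $f_2=1+x$, so that
\begin{eqnarray*}
f=f_1f_2=\pi^{3}+(\pi+\pi^{3})x+\pi x^{2}+x^{3}+x^{4},
\end{eqnarray*}
with $v(a_0)=3$, $v(a_1)=v(a_2)=1$, $v(a_3)=v(a_4)=0$. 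The hypotheses hold with $j=3$ and $\ell=1$ (so $(iv)$--$(v)$ are vacuous): $v(a_3)=0$; $\tfrac{v(a_1)}{2}=\tfrac12$ is strictly less than $\tfrac{v(a_0)}{3}=1$ and $\tfrac{v(a_2)}{1}=1$; and $\gcd(1,2)=1$. Yet $v(f_1(0))=3$ and $v(f_2(0))=0$, neither of which equals $v(a_\ell)=1$. What happens is exactly your unresolved case: $NP(f_2)$ has no edge of negative slope, so both negative-slope edges of $NP(f)$ come from $NP(f_1)$.

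You should not feel that the paper's proof supplies the missing step: after invoking Theorem \ref{th:B} it asserts that the two negative-slope edges of $NP(f)$ must be translates of one edge of $NP(f_1)$ and one edge of $NP(f_2)$, i.e.\ one from each factor, which is the same unjustified distribution assumption (and it then analyzes only one of the two possible pairings). The absence of interior lattice points on $E_0$ and $E_1$ prevents a single edge of $NP(f)$ from being assembled out of pieces of both factors, but it does not prevent both edges from landing in the same factor. So your proposal is as strong as the lemma allows: the distributed case and the $\ell=0$ case, which you prove correctly via the descent bookkeeping $v(b_0)\geq v(a_0)-v(a_\ell)$, $v(c_0)\geq v(a_\ell)$ with equality forced by $v(b_0)+v(c_0)=v(a_0)$, are exactly the parts of the statement that are true; for $\ell\geq1$ the remaining case is a counterexample, not a missing argument.
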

\begin{proof}[\bf Proof of Lemma \ref{L:2}] Since $v$ is a discrete valuation on $R$, $v$ is nonnegative on $R$. By the given hypotheses $(i)$ and $(ii)$ of the lemma, we find that in the Newton polygon  $NP(f)$ of $f$ with respect to $v$, there exists an edge joining the vertices $P(\ell,v(a_\ell))$ and $Q(j,v(a_j))=(j,0)$ of the negative slope $-v(a_\ell)/(j-\ell)$. Since $\gcd(v(a_\ell),~j-\ell)=1$, it follows from Lemma \ref{L:1} that there is no lattice point on the edge $PQ$ other than its  end points $P$ and $Q$. Further, we note that $PQ$ is the rightmost edge of $NP(f)$ having negative slope. 
Let
\begin{eqnarray*}
f_1=b_0+b_1 x+\cdots+b_m x^m,~f_2=c_0+c_1 x+\cdots+c_{n-m} x^{n-m},
\end{eqnarray*}
in $R[x]$ with $\min\{m,n-m\}\geq 1$, where we also define $b_{m+i}=0=c_{n-m+i}$ for all $i\geq 1$.  Then we have $f_1(0)=b_0$ and $f_2(0)=c_0$ and $a_0=b_0c_0$ so that $v(a_0)=v(b_0)+v(c_0)$.  Since $v(a_j)=0$, there exist smallest indices $s\leq m$ and $t\leq n-m$ with $s+t=j$ for which $v(b_s)=0=v(c_t)$. 

We first assume that $\ell>1$. Suppose that none of  $v(f_1(0))$ and $v(f_2(0))$ is zero. Then $s>0$ and $t>0$. Then in view of the hypotheses $(ii)$, $(iv)$, and $(v)$ and Lemma \ref{L:1}, we find that there is another edge joining the vertices $M(0,v(a_0))$ and $P(\ell,v(a_\ell))$ on $NP(f)$ having no lattice point on $MP$ other than $M$ and $P$ and of negative slope $-(v(a_0)-v(a_\ell))/\ell$ not equal to $-v(a_\ell)/(j-\ell)$. We conclude that $NP(f)$ has  exactly two edges of negative slopes and are non-collinear. There exist least indices  $\alpha$ and $\beta$ for which we have
\begin{eqnarray*}
 \frac{v(b_\alpha)}{s-\alpha}&=&\min_{0\leq i\leq s-1}\Bigl\{\frac{v(b_i)}{s-i}\Bigr\},~
 \frac{v(c_\beta)}{t-\beta}=\min_{0\leq i\leq t-1}\Bigl\{\frac{v(c_i)}{t-i}\Bigr\}.
 \end{eqnarray*}
Observe that $v(b_\alpha)>0$ and $v(c_\beta)>0$, since $\alpha<s$ and $\beta<t$.
Thus, there is an edge on  $NP(f_1)$  joining the vertices $M_1(\alpha,v(b_\alpha))$ and $Q_1(s,0)$ of negative slope $-v(b_\alpha)/(s-\alpha)$, and there is an edge on $NP(f_2)$ joining the vertices $M_2(\beta,v(c_\beta))$ and $Q_2(t,0)$ of negative slope $-v(c_\beta)/(t-\beta)$. This in view of the fact that $NP(f)$ has exactly two edges of negative slopes and are non-collinear along with  the use of Theorem \ref{th:B} tells us that the edges $MP$ and $PQ$ of $NP(f)$ of negative slopes must be translates of the edges $M_1Q_1$ of $NP(f_1)$ and $M_2Q_2$ of $NP(f_2)$. Assume that $MP$ and $M_1Q_1$ are parallel, and $PQ$ and $M_2Q_2$ are parallel. Then we have
\begin{eqnarray*}
 \frac{v(a_0)-v(a_\ell)}{\ell}=\frac{v(b_\alpha)}{s-\alpha},~\frac{v(a_\ell)}{j-\ell}=\frac{v(c_\beta)}{t-\beta},
 \end{eqnarray*}
so that $(0,v(a_0))=(\alpha+u,v(b_\alpha)+v(c_\beta)),~(\beta+s,v(c_\beta))=(s+u,v(c_\beta))=(\ell,v(a_\ell))$ for some integer $u$. From these equations we have $\alpha+u=0$, $\beta+s=s+u=\ell$ and so, $u=\beta$ and $\ell=s+\beta$. Thus, $\alpha+\beta=0$ which gives $\alpha=0=\beta$.  Since $s=s+u=\ell$  and $v(c_0)=v(c_\beta)=v(a_\ell)$, we have $t=j-\ell$ and so, we arrive at the following.
\begin{eqnarray*}
 \frac{v(a_0)-v(a_\ell)}{\ell}=\frac{v(b_0)}{s}=\frac{v(b_0)}{\ell},~\frac{v(a_\ell)}{j-\ell}=\frac{v(c_0)}{t}=
\frac{v(c_0)}{j-\ell},
\end{eqnarray*}
from which we get  $v(a_0)-v(a_\ell)=v(b_0)$ and $v(a_\ell)=v(c_0)$. This proves the lemma for the case when $\ell>1$.

The proof in the case $\ell=1$ is similar to the case when $\ell>1$, and so, we omit the proof.

Now suppose that $\ell=0$. In this case, we observe from the given hypothesis that $NP(f)$ has only one edge of negative slope $-v(a_0)/j$ joining the vertices $A(0,v(a_0))$ and $B(j,0)$ with no lattice point on $AB$ other than the end points $A$ and $B$ and with horizontal projection $j$. This in view of Theorem \ref{th:B} tells us that exactly one of $NP(f_1)$ and $NP(f_2)$ has unique edge of negative slope whose translate is the edge $AB$ of $NP(f)$. Assume that $AB$ is translate of the edge of $NP(f_1)$ having negative slope $-v(a_0)/j$ and horizontal projection $j$ in the cartesian plane $\mathbb{R}^2$. This forces $s=j$ and $t=0$, so that in this case we have  $v(a_0)=v(b_0)$ and $v(c_0)=0$. 
\end{proof}
The case $\ell=0$ of Lemma \ref{L:2} is also interesting and we record it separately here for its later use.
\begin{lemma}\label{L:3}
Let $(R,v)$  be a discrete valuation domain. Let $f=a_0 + a_1x + \cdots + a_nx^n \in {R}[x]$ be such that there exists an index  $j$  with $1\leq j\leq n$ for which the following hold.
\begin{enumerate}[label=$(\roman*)$]
\item $v(a_j)=0$,
\item $\frac{v(a_0)}{j}<\frac{v(a_i)}{j-i}$ for each $i=1,\ldots, j-1$,
\item $\gcd(v(a_0),~j) = 1$.
\end{enumerate}
Then any factorization $f(x)=f_1(x)f_2(x)$ of $f$ in $R[x]$ satisfies $v(f_1(0))=v(a_0)$, or $v(f_2(0))=v(a_0)$.
\end{lemma}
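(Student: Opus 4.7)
The plan is to recognize that Lemma \ref{L:3} is exactly the $\ell=0$ specialization of Lemma \ref{L:2}, in which hypotheses $(iv)$ and $(v)$ of that lemma become vacuous. Consequently, the conclusion is a direct corollary of Lemma \ref{L:2}, and in fact the closing paragraph of its proof already treats this case explicitly; the shortest writeup is simply to cite Lemma \ref{L:2}.

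Were I instead to spell out a standalone argument, I would first use hypotheses $(i)$ and $(ii)$ to identify $NP(f)$ as having a unique edge $AB$ of negative slope, joining $A(0,v(a_0))$ to $B(j,0)$, of slope $-v(a_0)/j$ and horizontal projection $j$. Indeed, $(ii)$ is exactly the assertion that every intermediate point $(i,v(a_i))$ with $0<i<j$ lies strictly above the line through $A$ and $B$, while the portion of $NP(f)$ to the right of $B$ has nonnegative slopes because $v\ge 0$ on $R$ and $v(a_j)=0$. Next, Lemma \ref{L:1} together with $(iii)$ guarantees that $AB$ contains no lattice point other than its endpoints.

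Then I would apply Theorem \ref{th:B} to the factorization $f=f_1f_2$, writing $f_1=b_0+\cdots+b_m x^m$ and $f_2=c_0+\cdots+c_{n-m} x^{n-m}$. Since $a_0=b_0c_0\ne 0$, both $b_0,c_0$ are nonzero, so the leftmost vertices of $NP(f_1)$ and $NP(f_2)$ sit on the $y$-axis at $(0,v(b_0))$ and $(0,v(c_0))$, respectively. The crucial observation is that $AB$ must arise as a translate of a single edge of either $NP(f_1)$ or $NP(f_2)$: otherwise the junction between two concatenated translated edges would be an interior lattice point on $AB$, contradicting the previous step. Assuming $AB$ is a translate of an edge of $NP(f_1)$, that edge has horizontal projection $j$ and left endpoint $(0,v(b_0))$, and matching the slope $-v(a_0)/j$ forces $v(b_0)=v(a_0)$, i.e., $v(f_1(0))=v(a_0)$. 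The symmetric case gives $v(f_2(0))=v(a_0)$. The only nontrivial step is the indecomposability of $AB$ into shorter collinear translated pieces, and this is precisely where hypothesis $(iii)$ enters via Lemma \ref{L:1}.
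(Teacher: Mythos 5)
Your proposal is correct and takes essentially the same route as the paper: Lemma \ref{L:3} is recorded there precisely as the case $\ell=0$ of Lemma \ref{L:2}, whose proof handles that case by exactly the Newton-polygon argument you sketch (unique negative-slope edge $AB$, no interior lattice points via Lemma \ref{L:1} and hypothesis $(iii)$, and indecomposability of $AB$ under Theorem \ref{th:B} forcing it to be a translate of a single edge of $NP(f_1)$ or $NP(f_2)$). Nothing further is needed.
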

Note that Lemma \ref{L:3} in particular yields the fundamental results of the papers \cite[Lemma 1]{J-S-3} and  \cite[Lemma 1.4]{Bonciocat2013}.
\begin{proof}[\bf Proof of Theorem \ref{th:2}] The case $j=n$ follows from Theorem \ref{th:1} for $\ell=0$. Now assume that $j<n$. Suppose on the contrary that $f(x)=f_1(x)f_2(x)$ for nonconstant polynomials $f_1$ and $f_2$ in $\mathbb{Z}[x]$. Applying Lemma \ref{L:3} for $v=v_p$, the $p$-adic valuation of $\mathbb{Q}$, we have
 \begin{eqnarray}\label{e1}
v_p(f_1(0))=0,~\text{or}~v_p(f_2(0))=0,
 \end{eqnarray}
 which shows that $p$ does not divide at least one of $f_1(0)$ and $f_2(0)$. Assume that $p$ does not divide $|f_1(0)|$. If we define $v_p(a_0)=k$, then as $\pm p^kd=a_0=f(0)=f_1(0)f_2(0)$, it follows that $p^k$ divides $|f_2(0)|$, since $p\nmid d$. Consequently, we must have $|f_1(0)|\leq d$.

If $\theta_1,\ldots, \theta_{\deg f_1}$ are all zeros of $f_1$ in ${\mathbb{C}}$, then $|\theta_i|>d$ for all such $i$. If $0\neq \alpha \in \mathbb{Z}$ denotes the leading coefficient of $f_1$, then we may express $f_1$ as
 \begin{eqnarray*}
f_1(x)=\alpha(x-\theta_1)\cdots (x-\theta_{\deg f_1}).
\end{eqnarray*}
We then have
\begin{eqnarray*}
d\geq |f_1(0)|=|\alpha||\theta_1|\cdots |\theta_{\deg f_1}|>|\alpha_1| d^{\deg f_1}\geq d,
\end{eqnarray*}
which is a contradiction.
\end{proof}
\begin{proof}[\bf Proof of Corollary \ref{c:3}]
The given polynomial satisfies all the hypothesis of Theorem \ref{th:2}. To see this, we only need to verify the hypothesis $(ii)$ of Theorem \ref{th:2} for $f$. We proceed as follows. Since $v_p(a_0)\leq v_p(a_i)$ for each $i=1,\ldots, j-1$, we have \begin{eqnarray*}
v_p(a_0)\leq v_p(a_i)<\frac{j}{j-i}v_p(a_i),~i=1,\ldots, j-1,
\end{eqnarray*}
and so, the hypothesis $(ii)$ of Theorem \ref{th:2} holds. Thus, the polynomial $f$ is irreducible in $\mathbb{Z}[x]$.
\end{proof}
\begin{proof}[\bf Proof of Corollary \ref{c:4}]
Taking $j=km+1$ in Theorem \ref{th:2} we  find that for all $s=1,\ldots, m$ and $t=1,\ldots, k$, we have
\begin{eqnarray*}
k\{km+1-(k-t)m-s\}-t(km+1)=-k(s-1)-t\leq -t<0.
\end{eqnarray*}
Consequently, we have the following for all $s=1,\ldots m,~t=1,\ldots, k$.
\begin{eqnarray*}
\frac{v_p(a_0)}{j}=\frac{k}{km+1}<\frac{t}{km+1-(k-t)m-i}=\frac{v_p(a_{(k-t)m+s})}{j-((k-t)m+s)}.
\end{eqnarray*}
So, by Theorem \ref{th:2}, the polynomial $f$ is irreducible in $\mathbb{Z}[x]$.
\end{proof}
\begin{proof}[\bf Proof of Theorem \ref{th:5}]Let $f(x)=f_1(x)f_2(x)\cdots f_s(x)$, where each of $f_1,f_2,\ldots, f_s$ is an irreducible polynomial in $\mathbb{Z}[x]$, and  $s\geq 2$ is a positive integer. Assume on the contrary that $s>r$. Since each zero of $f$ lies outside the closed unit disk in the complex plane, it follows that $|f_t(0)|> 1$ for each $t=1,\ldots, s$. This along with the fact that
\begin{eqnarray*}
\pm p_1^{k_1}\cdots p_r^{k_r}=f(0)=f_1(0)f_2(0)\cdots f_s(0),
\end{eqnarray*}
and that $s>r$, tells us that there exist at least two distinct indices $\ell, m \in \{1,\ldots, s\}$  and an index $i\in \{1,\ldots, r\}$ for which $p_i$ divides each of $f_\ell(0)$ and $f_{m}(0)$, that is, $v_{p_i}(f_\ell(0))>0$ and $v_{p_i}(f_m(0))>0$. If we choose a partition of the set of indices $\{1,2,\ldots, s\}$ into two disjoint subsets $A$ and $B$ for which $\ell$ belongs to $A$ and $m$ belongs to $B$, then
we have the factorization
\begin{eqnarray*}
f(x)=\prod_{t\in A} f_t(x) \prod_{t'\in B} f_{t'}(x),
\end{eqnarray*}
into the polynomials $\prod_{t\in A} f_t(x)$ and $\prod_{t'\in B} f_{t'}(x)$ in $\mathbb{Z}[x]$ for which we have
\begin{eqnarray*}
v_{p_i}\Bigl(\prod_{t\in A} f_t(0)\Bigr)\geq v_{p_i}(f_\ell(0))>0;~ v_{p_i}\Bigl(\prod_{t'\in B} f_{t'}(0)\Bigr)\geq v_{p_i}(f_m(0))>0.
\end{eqnarray*}
This contradicts Lemma \ref{L:3} for $p=p_i$, and so, we must have $s\leq r$.
\end{proof}
\subsection*{Disclosure statement}
The author reports to have no competing interests to declare.

\end{document}